\newcommand\NN{\mathbb N}
\newcommand\tr{\operatorname{tr}}
\theoremstyle{plain}
\newtheorem{thm}{Theorem}[section]
\newtheorem{prop}[thm]{Proposition}
\newtheorem{cor}[thm]{Corollary}
\theoremstyle{definition}
\theoremstyle{remark}
\title[Time and band limiting for matrix valued functions ]{\sc
Time and band limiting for matrix valued functions: \\ an integral and a commuting differential operator}
\author{Gr\"unbaum F. A., Pacharoni  I. and  Zurri\'an I.}
\address{Department of Mathematics, University of California, Berkeley
CA 94705}
\email{grunbaum@math.berkeley.edu}
\address{CIEM-FaMAF, Universidad Nacional de C\'ordoba, C\'ordoba~5000, Argentina}
\email{pacharon@famaf.unc.edu.ar}
\address{Facultad de Matematicas, Pontificia Universidad Cat\'olica de Chile, Santiago, Chile}
\email{zurrian@famaf.unc.edu.ar}
\date{\today}
\thanks{This research was supported in part by CONICET grant PIP
112-200801-01533, SeCyT-UNC and FONDECYT 3160646}
\subjclass[2010]{33C45, 22E45, 33C47}
\keywords{Time-band limiting, Double concentration, Matrix valued orthogonal polynomials}
\begin{document}

\begin{abstract}

  The problem of recovering a signal of finite duration from a piece of its Fourier transform was solved at Bell Labs in the $1960$'s, by exploiting a ``miracle": a certain naturally appearing integral operator commutes with an explicit  differential one. Here we show that this same miracle holds in a matrix valued version of the same problem.

\end{abstract}

\maketitle

\section{Introduction}

A typical inverse problem with partial and noisy data can be formulated as follows:

\medskip
The unknown is a function $f(x)$ defined in physical space, and we may have some a priori  knowledge about $f(x)$. For instance $f$ could be supported in a compact part of physical space. The data is a piece
of its Fourier transform $\mathcal F(k)$ contaminated by noise. What is the best use one can make of this data to get a stable reconstruction  of $f(x)$?

\medskip
Many imaging problems fall under this umbrella, and maybe the question above was put in this precise form first by Claude Shannon while coming up with a mathematical foundation for Communication Theory back around 1950. His problem was addressed
in a remarkable series of papers by (different combinations of) D. Slepian, H. Landau and H. Pollak all working at Bell Labs where C. Shannon had done initial
work before leaving for MIT. See \cite{SLP1,SLP2,SLP3,SLP4,SLP5}.

\medskip

One of us run into a situation of this kind in dealing with ``limited angle tomography," see \cite{DG79,DG81,N01}. A detailed exploration of the corresponding ``singular value decomposition" for this problem, see \cite{D83,L86,G86}, made strong contact with the work of these workers at Bell Labs.

\

With this motivation at hand, we can give an account of what we do in this paper: we start with a (matrix valued) version of
a second order differential operator.  Here Shannon would have started with the (scalar valued) second derivative.
  Our harmonic analysis    in terms    of eigenfunctions of this operator is the analog of Fourier analysis in the classical case.

We then build the analog of the ``time-and-band limiting" integral  operator,  which we will denote by $S$. We then show that the same ``lucky accident"  found by the workers at Bell labs holds here too: we can exhibit a second order differential operator, denoted by $\widetilde{D}$, such that$$S \widetilde{D}  = \widetilde{D}  S.$$

This has, as in the original case of Shannon, very important numerical consequences: it gives a reliable way to compute the eigenvectors of $S$, something that cannot be done otherwise.

\medskip

The eigenfunctions of $S$ and $\widetilde {D}$ are the same ones, but  by using the differential operator instead of the integral one we have a manageable numerical problem: while the integral operator has a spectrum with eigenvalues that are extremely close together, the differential one has a very spread out spectrum, resulting in a stable numerical computation.

\medskip

Previous explorations of   a commutativity property  similar to the one above in the matrix valued case can be seen in \cite{GPZ15,CG15}, dealing with a full matrix and a narrow banded one.
 Our work here is related to that in \cite{GPZ15} but there are some important differences: instead of studying the commutant of a matrix $M$, which arises in the problem of time and band limiting, we look for a symmetric differential operators of order two commuting with an integral operator, which is the other operator of interest featuring in the time and band limiting problem. This extends the work started in the previous references and is much closer to the  early work at Bell Labs in the scalar case.  When they consider the unit circle as physical space, they also have to deal with two different situations, as we do.

\

The work \cite{GLP82} was written with no particular application in mind but some years later some applications were developed from the results thereof, see \cite{SD06,SDW06} and its references. We believe that the results obtained here, where one is dealing with matrix valued functions defined on spheres, will   open  the possibility  of new applications in the future,    most likely to an inverse problem involving tensor quantities. While we deal here with $2 \times 2$ matrix valued functions, the theory of matrix valued spherical functions yields situations with matrices of arbitrary size. For a sample of references on ``tensor tomography" see \cite{GRZAP99,PSU14} and references in these papers.

\smallskip
We hope that by placing these mathematical tools in front of the inverse problem community we may encourage people to use them in concrete imaging problems.

\section{Preliminaries}

We follow the same notation as  in \cite{GPZ15}, but we include some motivation here for benefit of the reader.
  Let $W(x)$ be a    matrix weight function    in the open interval $(a,b)$ and let  \{$Q_w(x)\}_{w\in\NN_0}$
 be a sequence of  matrix orthonormal polynomials with respect to the weight $W(x)$.

The Hilbert spaces $\ell^2(M_R, \NN_0)$ and  $L^2((a,b), W(t)dt)$ are given by the real valued $R\times R$ matrix sequences
$(C_w)_{w\in \NN_0}$ such that $\sum_{w=0}^\infty \tr \left( C_w\,C_w^*\right) < \infty$  and all measurable matrix valued functions $f(x)$, $x\in (a,b)$, satisfying $\int_a^b \tr\left(f(x)W(x)f^*(x)\right)dx < \infty $, respectively.
A natural   analog of the Fourier transform is the isometry $F:\ell^2(M_R,\NN_0) \longrightarrow L^2(W)$ given by
$$(C_w)_{w=0}^\infty \longmapsto  \sum_{w=0}^\infty C_w Q_w(x).$$
In our case, the matrix-polynomials are dense in $L^2(W)$, then this map is unitary with the inverse $F^{-1}: L^2(W)\longrightarrow \ell^2(M_R,\NN_0) $ given by
$$ f \longmapsto C_w=\int_a^b f(x)\,W(x)\, Q^*_w(x) dx.$$

 If we consider the problem of determining a function $f$,  from the following (typically noisy) data: $f$ has support on the compact set $[0,N]$ and its Fourier transform $Ff$ is known on a compact set $[a,\alpha]$, one concludes that
we need to compute the singular vectors (and values) of the operator $E:\ell^2(M_R,\NN_0)\longrightarrow L^2(W) $ given by
$$E f= \chi_\alpha F \chi_N f,$$
where $\chi_N$ is the {\em time limiting  operator} and  $\chi_\alpha$ is the {\em band limiting operator}.

At level $N$, $\chi_N$ acts on $\ell^2(M_R,\NN_0)$ by simply setting equal to zero all the components with index larger than $N$. At level $\alpha$, $\chi_\alpha$ acts on $L^2(W)$ by multiplication by the characteristic function of the interval $(a, \alpha)$, $a<\alpha\le b$.

 This leads us to study the eigenvectors of the operators
$$E^*E= \chi_N F^{-1} \chi_\alpha F \chi_N\qquad \text{ and } \qquad E E^*= \chi_\alpha F \chi_N F^{-1} \chi_\alpha.$$
The operator $E^*E$, acting on $\ell^2(M_R,\NN_0)$ is just a finite dimensional block-matrix $M$, and each block is given by
$$(M)_{m,n}=(E^*E)_{m,n}= \int_a^\alpha Q_m(x) W(x) Q^{*}_n(x)  dx, \qquad 0\leq m,n \leq N.$$
The second operator $S= E E^*$ acts on $L^2((a,\alpha), W(t)dt)$ by means of the integral kernel
\begin{equation}\label{kernel}
  k(x,y)=\sum_{w=0}^N Q_w^*(x)Q_w(y).
\end{equation}
 The action of $k(x,y)$ is spelled out later in formula \eqref{intoper} more explicitly. 

\medskip

For general $N$ and $\alpha$ there is no hope of finding the eigenfunctions of $EE^*$ and $E^*E$ analytically. However, there is a strategy to solve this typical inverse problem:
  finding an operator with simple spectrum which would have the same eigenfunctions as the operators $E E^*$ or $E^* E$. This is exactly what Slepian, Landau and Pollack did in the scalar case, when dealing with the unit circle and the usual Fourier analysis. They discovered the following properties:
\begin{itemize}
  \item For each $N$, $\alpha$ there exists a symmetric tridiagonal matrix $L$, with simple spectrum, commuting with $M$.
  \item For each $N$, $\alpha$ there exists a self-adjoint differential operator $D$, with simple spectrum, commuting with the integral operator $S=EE^*$.
\end{itemize}

More than this is true: thanks to this lucky accident one replaces a costly ill-conditioned problem by a manageable well-conditioned one. It is hard to ask for a better situation.

\medskip

In \cite{GPZ15} we have dealt with the analog of the first property for the case discussed in \cite{PZ16}. In this paper we address the second one of the properties above in the same situation.

In both cases, the role of  the unit circle will be taken up by the $n$-dimensional sphere. We consider $2 \times 2$ matrix valued functions defined on the sphere with the appropriate invariance that makes them functions of the colatitude $\theta$ and we use $x=\cos(\theta)$ as the variable.
The role of the Fourier transform is taken by the expansion of our functions in terms of a basis of matrix valued orthogonal polynomials described below. This is similar to the situation discussed in \cite{GLP82} except for the crucial fact that our functions are now matrix valued.

\

\section{Integral and Differential operators}

Given the sequence of matrix orthonormal  polynomials $\{Q_w\}_{w\geq 0}$ with respect to the weight $W$, we fix a natural number $N$ and $\alpha \in (-1,1)$. We consider the integral operator $S$ with kernel $k$, defined in \eqref{kernel}, acting on $L^2((-1,\alpha), W)$ ``from the right hand side'':
\begin{equation}\label{intoper}
   (fS)(x)=\int_{-1}^\alpha f(y)W(y)\big(k(x,y)\big)^*dy
\end{equation}

 The restriction to the interval $[-1,\alpha]$ implements ``band-limiting" while the
restriction to the range $0, 1, . . . , N$ takes care of ``time-limiting.'' In the language of \cite{GLP82} where we were dealing with scalar valued functions defined on spheres the first restriction gives us a ``spherical cap" while the second one amounts to truncating the expansion in spherical harmonics.


The aim of this paper is to prove that there exists a symmetric differential operator $\tilde D$, defined in $[-1,\alpha]$,
commuting with the integral operator $S$, that is
$$\tilde D \,S=S\tilde D.$$





 Symmetry for the differential operator $\tilde D$ means  that
$$\langle P\tilde D, Q\rangle_\alpha=\langle P, Q\tilde D \rangle_\alpha, $$
 for an appropriate dense set of functions $P,Q$   where
\begin{equation}\label{W-trunc}
  \langle P,Q\rangle_\alpha=\int_{-1}^\alpha P(x)W(x)Q^*(x)\, dx.
\end{equation}

Notice that in principle there is no guarantee that we will find any such $\tilde D$ except for a scalar multiple of the identity. For the problem at hand we need to exhibit a differential operator  $\tilde D$ that has a simple spectrum, which would imply that its eigenfunctions are also eigenfunctions of the integral operator $S$ (see comments in Section \ref{CyO}).

\begin{prop}\label{Dk}
  Let $C$ be a symmetric differential operator and $S$  be an  integral operator with kernel $k$. Then
  $$C S=SC\qquad \text{ if and only if } \qquad
   \left( k(x,y)^*\right)C_x= (k(x,y)C_y)^*.$$
($C_x$  is meant to emphasize that $C$ acts on the variable $x$).
\end{prop}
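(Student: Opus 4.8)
The plan is to verify the equivalence by a direct computation that transports the differential operator $C$ from one factor to the other inside the defining integral of $S$, comparing the two compositions $CS$ and $SC$ acting (from the right) on an arbitrary test function $f$. Recalling that composition is read left to right, I would write $f(CS)=(fC)S$ and $f(SC)=(fS)C$, and then express each as an integral against the kernel using \eqref{intoper}, obtaining
$$\big(f(CS)\big)(x)=\int_{-1}^\alpha (fC)(y)\,W(y)\,k(x,y)^*\,dy,\qquad \big(f(SC)\big)(x)=\Big(\int_{-1}^\alpha f(y)\,W(y)\,k(x,y)^*\,dy\Big)C_x.$$

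For the $SC$ side, the operator $C_x$ differentiates only in $x$ and acts from the right, so it commutes past the $x$-independent left factor $f(y)W(y)$ and moves inside the $y$-integral; this is the routine step and yields
$$\big(f(SC)\big)(x)=\int_{-1}^\alpha f(y)\,W(y)\,\big(k(x,y)^*C_x\big)\,dy.$$
For the $CS$ side, the key idea is to read the integral as the truncated inner product \eqref{W-trunc}: with $x$ held fixed as a parameter and $k(x,\cdot)$ viewed as a function of $y$, we have $\big(f(CS)\big)(x)=\langle fC,\,k(x,\cdot)\rangle_\alpha$. Invoking the assumed symmetry of $C$ in the form $\langle fC,\,k(x,\cdot)\rangle_\alpha=\langle f,\,k(x,\cdot)\,C_y\rangle_\alpha$ then transfers $C$ onto the second slot, giving
$$\big(f(CS)\big)(x)=\int_{-1}^\alpha f(y)\,W(y)\,\big(k(x,y)\,C_y\big)^*\,dy.$$

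Comparing the two resulting integrands, $CS=SC$ holds if and only if $\int_{-1}^\alpha f(y)W(y)\big[(k(x,y)C_y)^*-k(x,y)^*C_x\big]\,dy=0$ for every $f$ in the dense domain, which by the nondegeneracy of $\langle\cdot,\cdot\rangle_\alpha$ and the freedom in $f$ is equivalent to the pointwise identity $k(x,y)^*C_x=(k(x,y)C_y)^*$; both implications read off from the same equality, so the ``if and only if'' is immediate. The step I expect to require the most care is the use of the symmetry of $C$: the identity $\langle PC,Q\rangle_\alpha=\langle P,QC\rangle_\alpha$ is only asserted on an appropriate dense set, so I must check that $k(x,\cdot)$, which is a matrix polynomial in $y$ for each fixed $x$ by \eqref{kernel}, lies in that set, and that the integration by parts implicit in the symmetry produces no surviving boundary contributions at $y=-1$ and $y=\alpha$ on the truncated interval. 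Granting this, the manipulations above are formal and the proposition follows.
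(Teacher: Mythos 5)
Your proposal is correct and follows essentially the same route as the paper: apply the definition \eqref{intoper} to both compositions, use the symmetry of $C$ (viewing the integral as the truncated inner product $\langle\,\cdot\,,\,\cdot\,\rangle_\alpha$) to move $C_y$ onto the kernel on the $CS$ side, pull $C_x$ inside the $y$-integral on the $SC$ side, and conclude by comparing kernels. In fact you supply more detail than the paper, whose proof ends with ``the proposition follows easily'': you make explicit the density/nondegeneracy argument for passing from equality of the integrals to the pointwise kernel identity, and you correctly flag that $k(x,\cdot)$, being a matrix polynomial in $y$, lies in the dense set on which symmetry holds.
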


\begin{proof}
  Let us observe that from the symmetry of $C$ we have
  $$ ((fC )S)(x)= \int_{-1}^\alpha (fC_y )(y)W(y)\big(k(x,y)\big)^*dy= \int_{-1}^\alpha  f(y)W(y)\big( k(x,y)C_y\big)^*dy. $$
  On the other hand
  $$((fS)C)(x)=\int_{-1}^\alpha f(y)W(y) \big(k(x,y)^*\big)C_xdy.$$

Therefore the proposition follows easily.
\end{proof}

\medskip

The polynomials considered here are those studied in \cite{PZ16}, given by the matrix-valued spherical functions associated with the $n$-dimensional sphere
$S^n\simeq G/K$, with $(G,K)=(\mathrm{SO}(n+1), \mathrm{SO}(n))$, studied in \cite{TZ14b}.
These spherical functions give rise to sequences $\{R_w\}_{w\in\NN_0}$ of monic matrix orthogonal polynomials  depending on two parameters $n$ and $p$ in $\mathbb R$ such that $0< p< n$. Namely,
\begin{equation*} \label{Pwdef}
R_w(x)=  \frac {w!\,(n+1)}{2^w (\tfrac{n+1}2)_w}\,\begin{pmatrix}
\frac{1}{n+1}\, C_w^{\frac{n+1}{2}}(x)+\frac{1}{p+w}\,C_{w-2}^{\frac{n+3}{2}}(x)&\frac{1}{p+w}\,C_{w-1}^{\frac{n+3}{2}}(x)\\ \mbox{} \\
\frac{1}{n-p+w}\,C_{w-1}^{\frac{n+3}{2}}(x)&\frac{1}{n+1}\, C_w^{\frac{n+1}{2}}(x)+\frac{1}{n-p+w}\,C_{w-2}^{\frac{n+3}{2}}(x)
\end{pmatrix},
\end{equation*}

\smallskip
\noindent where   $C_w^\lambda(x)$ denotes the $w$-th Gegenbauer polynomial
%
%
%
%
%
%
The matrix polynomials $\{R_w\}_{w\geq 0} $ are orthogonal with respect to the weight matrix
\begin{equation}\label{peso-x}
  W(x)=W_{p,n}(x)= (1-x^2)^{\tfrac n2 -1} \begin{pmatrix}
  p\,x^2+n-p & -nx\\ -nx & (n-p)x^2+p
\end{pmatrix},\qquad x\in [-1,1].
\end{equation}

\

The kernel $k(x,y)$ appearing in the definition of the integral operator $S$ is given    in terms    of  
the orthonormal sequence of matrix polynomials
\begin{equation}\label{relationQP}
  Q_w= S_wR_w,
\end{equation}
where $S_w=\|R_w\|^{-1}$ is the inverse of the matrix norm of $R_w$ given by
\begin{equation}\label{norma2Pw}
\begin{split}
\langle R_w,R_w\rangle  =\| R_w\|^2
  =
\frac{\sqrt \pi \;\Gamma(\tfrac n2 +1)(n+1)_w}{w!\,(n+1)(n+2w+1)\Gamma(\tfrac n2+\tfrac 32)} 
\begin{pmatrix}
   \frac {p\,(n-p+w+1)}{p+w}&0\\0& \frac{(n-p)(p+w+1)}{n-p+w}
\end{pmatrix}.
\end{split}
 \end{equation}

\

\begin{thm} \label{operatorD} (Theorem 3.1, \cite{PZ16}.) For each $w\in \NN_0$, the matrix  polynomial $R_w$ satisfies
$R_wD= \Lambda_w R_w(x), $ where
$$D= \frac{d^2}{dx} \,(1-x^2)-\frac{d}{dx} \,\Big( (n+2)x+2\left(\begin{smallmatrix}
  0&1\\1&0\end{smallmatrix}\right) \Big)-\left(\begin{smallmatrix}
    p&0\\0&n-p  \end{smallmatrix}\right), $$
and the eigenvalue   is given by
$$\Lambda_w(D)= \begin{pmatrix}
  -w(w+n+1)-p & 0\\ 0& -w(w+n+1)-n+p
\end{pmatrix}.$$
\end{thm}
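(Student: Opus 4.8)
The plan is to prove the identity by direct substitution of the explicit expression for $R_w$, turning the matrix eigenvalue equation into a handful of classical identities for Gegenbauer polynomials. Reading $D$ as a right-hand-side operator, the claim $R_wD=\Lambda_w R_w$ unfolds as
$$ (1-x^2)R_w'' - R_w'\Big((n+2)x+2\left(\begin{smallmatrix}0&1\\1&0\end{smallmatrix}\right)\Big) - R_w\left(\begin{smallmatrix}p&0\\0&n-p\end{smallmatrix}\right) = \Lambda_w R_w. $$
The scalar prefactor $\frac{w!\,(n+1)}{2^w\,(\tfrac{n+1}2)_w}$ depends only on $w$, so it commutes past the constant left factor $\Lambda_w$ and may be cancelled; I therefore work with the matrix whose entries are assembled from $C_w^{(n+1)/2}$, $C_{w-1}^{(n+3)/2}$ and $C_{w-2}^{(n+3)/2}$.

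First I would introduce the scalar operator $\mathcal L f=(1-x^2)f''-(n+2)xf'$. Since $2\lambda+1=n+2$ precisely when $\lambda=\tfrac{n+1}2$, the Gegenbauer differential equation gives $\mathcal L\,C_w^{(n+1)/2}=-w(w+n+1)\,C_w^{(n+1)/2}$; this supplies the common term $-w(w+n+1)$ in both diagonal eigenvalues. For the entries built from $C_m^{(n+3)/2}$ the parameter is shifted ($2\mu+1=n+4$), so $\mathcal L$ leaves a residual first-order term $2x\,(C_m^{(n+3)/2})'$. It is exactly this mismatch that the off-diagonal block $2\left(\begin{smallmatrix}0&1\\1&0\end{smallmatrix}\right)$ in the first-order coefficient is built to absorb, and the convenient relation $\tfrac{d}{dx}C_w^{(n+1)/2}=(n+1)\,C_{w-1}^{(n+3)/2}$ (the factor $n+1$ cancelling the $\tfrac{1}{n+1}$ in $R_w$) is what makes the columns line up.

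Next I would expand the left-hand side entrywise. Right multiplication by $\left(\begin{smallmatrix}0&1\\1&0\end{smallmatrix}\right)$ interchanges the two columns and right multiplication by $\left(\begin{smallmatrix}p&0\\0&n-p\end{smallmatrix}\right)$ rescales them, so after removing the terms that already match $\Lambda_w R_w$ each of the four entries collapses to a single scalar identity among Gegenbauer polynomials of parameters $\tfrac{n+3}2$ and $\tfrac{n+5}2$. For the $(1,1)$ entry, after clearing the factor $1/(p+w)$, one is reduced to
$$ x\,\big(C_{w-2}^{(n+3)/2}\big)' + (w+n+1)\,C_{w-2}^{(n+3)/2} = \big(C_{w-1}^{(n+3)/2}\big)', $$
and the remaining entries reduce to analogous relations. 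These follow from the derivative rule $\tfrac{d}{dx}C_m^\mu=2\mu\,C_{m-1}^{\mu+1}$ together with the contiguous relation $(m+\mu)C_m^\mu=\mu\big(C_m^{\mu+1}-C_{m-2}^{\mu+1}\big)$ and the three-term recurrence for $C_m^{\mu+1}$; combining them yields the displayed identity and its siblings.

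I expect the main difficulty to be organizational rather than deep: one must track the parameter shift $\lambda\mapsto\lambda+1$ separating the degree-$w$ leading term from the degree-$(w-1),(w-2)$ corrections, and check that the $w$-dependent coefficients $1/(p+w)$ and $1/(n-p+w)$ conspire with the off-diagonal and zeroth-order blocks so that all four scalar identities close simultaneously, leaving exactly the common factor $-w(w+n+1)$ together with the constants $-p$ and $-(n-p)$. Conceptually the existence of such a $D$ is not accidental: $R_w$ arises from a matrix spherical function on $S^n\simeq\mathrm{SO}(n+1)/\mathrm{SO}(n)$, and $D$ is the radial part of the Casimir operator, whose eigenvalue on each of the two irreducible constituents is the quadratic $-w(w+n+1)$ shifted by $-p$ and $-(n-p)$; this predicts both that the eigenvalue equation holds and the precise shape of $\Lambda_w$.
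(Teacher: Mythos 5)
Your proposal is correct, but the comparison here is lopsided: this paper contains no proof of the statement at all. The theorem is imported verbatim from \cite{PZ16} (Theorem 3.1 there), where it comes out of the theory of matrix-valued spherical functions for the pair $(\mathrm{SO}(n+1),\mathrm{SO}(n))$ --- the operator $D$ is in essence the radial part of the Casimir operator, which is exactly the conceptual explanation you give in your closing paragraph. Your direct entrywise verification is therefore a genuinely different, self-contained route, and it does work. Writing $\mathcal{L}f=(1-x^2)f''-(n+2)xf'$ and reading $D$ on the right, the $(1,1)$ equation reduces (after the $C_w^{(n+1)/2}$ terms cancel and the factor $1/(p+w)$ is cleared) precisely to your displayed identity $x\,\big(C_{w-2}^{(n+3)/2}\big)'+(w+n+1)\,C_{w-2}^{(n+3)/2}=\big(C_{w-1}^{(n+3)/2}\big)'$, which is an instance of the classical relation $\big(C_{m+1}^{\lambda}\big)'-x\big(C_m^{\lambda}\big)'=(m+2\lambda)\,C_m^{\lambda}$; the $(1,2)$ equation reduces to $x\big(C_{w-1}^{(n+3)/2}\big)'-\big(C_{w-2}^{(n+3)/2}\big)'=(w-1)\,C_{w-1}^{(n+3)/2}$, another classical relation, with the $p$-dependent terms cancelling exactly as you predict; and the $(2,1)$, $(2,2)$ equations then follow from the $(1,2)$, $(1,1)$ ones under the symmetry $p\leftrightarrow n-p$ combined with conjugation by $\left(\begin{smallmatrix}0&1\\1&0\end{smallmatrix}\right)$. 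The toolkit you cite (derivative rule, contiguous relation, three-term recurrence) is sufficient to derive both identities, so the plan closes. As for what each approach buys: the group-theoretic derivation behind \cite{PZ16} explains a priori why a second-order operator with exactly these matrix eigenvalues must exist and extends to the general framework of \cite{TZ14b}, while your computation is elementary and checkable with nothing beyond Gegenbauer identities, at the cost of being purely mechanical and specific to this $2\times2$ family. One cosmetic point: the paper's $\frac{d^2}{dx}$ in the statement of $D$ is a typo for $\frac{d^2}{dx^2}$, which you silently and correctly repaired.
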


\


 Let us remark that this differential operator acts on the variable $x$ ``from the right hand side."

\bigskip

Let us introduce the following right hand side differential operator $\tilde D$, acting on nice functions in the interval $(-1,\alpha)$.

\begin{equation} \label{opDtilde}
\begin{split}
  \tilde D= &\frac{d^2}{dx^2} \,(x^2-1)(x-\alpha)
+ \frac{d}{dx} \,\Big( (n+3) x^2 -\alpha(n+2) \, x -1+2 (x- \alpha) \left(\begin{smallmatrix}
  0&1\\1&0\end{smallmatrix}\right) \Big)\\
 & +\left(-N(N+n+2) x + \alpha (n-2p)
 \left(\begin{smallmatrix}   1&0\\0&0  \end{smallmatrix}\right)
    +     \left(\begin{smallmatrix}
    0&n-p+1\\p+1&0  \end{smallmatrix}\right)\right).
\end{split}
    \end{equation}
We will prove later  that $\tilde D$ is a symmetric operator and that it satisfies  $\left( k(x,y)^*\right)\tilde D_x= (k(x,y)\tilde D_y)^*$. Therefore $\tilde D$   commutes with the integral operator $S$ by Proposition \ref{Dk}.

\

Let us observe that the differential operator $\tilde D$ is somehow related to the differential operator $D$, given in Theorem \ref{operatorD}. Explicitly, we have
\begin{align*}
\tilde D=-D(x-\alpha)+ \frac{d}{dx}  (x^2 -1)+ \left(-N(N+n+2) - \left(\begin{smallmatrix}
    p&0\\0&n-p  \end{smallmatrix}\right)\right)x + \alpha (n-p)
    +
    \left(\begin{smallmatrix}
    0&n-p+1\\p+1&0  \end{smallmatrix}\right). \end{align*}

 This relation between the analogs of $D$ and $\tilde D$ is simpler in the case worked out at Bell Labs.

\noindent
For simplicity, we  use the notation
\begin{align*}
E_1= -N(N+n+2)\mathbf I- \left(\begin{smallmatrix}     p&0\\0&n-p  \end{smallmatrix}\right),
   &&
E_0=    \left(\begin{smallmatrix}     0&n-p+1\\p+1&0  \end{smallmatrix}\right).
\end{align*}

\

\begin{thm}
The differential operator $\tilde D$  is  symmetric with respect to the matrix valued inner product $\langle \, , \, \rangle_\alpha$ given in \eqref{W-trunc}.
\end{thm}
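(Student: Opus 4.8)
The plan is to follow the classical integration-by-parts framework for symmetric second order matrix differential operators: reduce the symmetry statement to a set of \emph{local} algebraic identities (the symmetry equations) relating the coefficients of $\tilde D$ to $W$, together with the vanishing of the boundary terms produced at $x=-1$ and $x=\alpha$. Writing $\tilde D$ as a right hand side operator $\tilde D=\partial^2 F_2+\partial F_1+F_0$, its coefficients are the scalar leading term $F_2=(x^2-1)(x-\alpha)\,\mathbf I$, together with $F_1=\big((n+3)x^2-\alpha(n+2)x-1\big)\mathbf I+2(x-\alpha)J$ and $F_0=-N(N+n+2)x\,\mathbf I+\alpha(n-2p)\left(\begin{smallmatrix}1&0\\0&0\end{smallmatrix}\right)+\left(\begin{smallmatrix}0&n-p+1\\p+1&0\end{smallmatrix}\right)$, where $J=\left(\begin{smallmatrix}0&1\\1&0\end{smallmatrix}\right)$. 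For $P,Q$ in a dense set (polynomials) I form $(P\tilde D)\,W Q^*$ and integrate by parts twice over $[-1,\alpha]$ so as to move all derivatives onto $Q^*$, expressing $\langle P\tilde D,Q\rangle_\alpha-\langle P,Q\tilde D\rangle_\alpha$ as a bulk integral plus a boundary contribution.

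Matching the coefficients of $(Q^*)''$, $(Q^*)'$ and $Q^*$ in the bulk integrand shows that it vanishes identically exactly when the three symmetry equations hold:
\begin{align*}
F_2W&=W F_2^*,\\
2(F_2W)'-F_1W&=W F_1^*,\\
(F_2W)''-(F_1W)'+F_0W&=W F_0^*.
\end{align*}
The first is automatic because $F_2$ is a real scalar multiple of the identity. Writing $W=(1-x^2)^{\tfrac n2-1}T(x)$ with $T$ the polynomial matrix of \eqref{peso-x} and replacing $W'$ by its logarithmic (Pearson) derivative, the remaining two collapse to polynomial matrix identities that are checked entry by entry. I expect the third equation, which involves the second derivative $(F_2W)''$ and hence several cancellations among the $2\times2$ blocks, to be the main computational obstacle: the off-diagonal contributions coming from $J$ and from the constant term $E_0$ must reproduce $W F_0^*$ exactly, and it is here that the precise constants $n-p+1$, $p+1$ and $\alpha(n-2p)$ are forced.

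The boundary contribution of the double integration by parts is
$$\Big[\,P'(F_2W)Q^*+P\big(F_1W-(F_2W)'\big)Q^*-P(F_2W)(Q^*)'\,\Big]_{-1}^{\alpha},$$
and its vanishing at both endpoints is the conceptual heart of the statement. At the cutoff $x=\alpha$ the factor $(x-\alpha)$ gives $F_2(\alpha)=0$, so the outer two terms drop; for the middle term one computes $(F_2W)'(\alpha)=F_2'(\alpha)W(\alpha)=(\alpha^2-1)W(\alpha)$, while a direct evaluation yields $F_1(\alpha)=(\alpha^2-1)\mathbf I$, so that $F_1W-(F_2W)'$ vanishes at $\alpha$ as well. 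This is precisely the cancellation that the constant $-1$ and the term $2(x-\alpha)J$ inside $F_1$ were designed to produce, the analog of the band-edge ``lucky accident.'' At the natural endpoint $x=-1$ one again has $F_2(-1)=0$, so $F_2W$ carries a factor $(1-x^2)^{\tfrac n2}$ that vanishes there; the only potentially surviving (and, for $0<n<2$, singular) piece of the middle term cancels because the relevant coefficient matrix annihilates $T(-1)$, and for $n>2$ it vanishes outright since $W(-1)=0$. In all cases every boundary term is zero.

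Combining the two ingredients, the bulk integral vanishes identically by the symmetry equations and the boundary term vanishes at both endpoints, whence $\langle P\tilde D,Q\rangle_\alpha=\langle P,Q\tilde D\rangle_\alpha$ and $\tilde D$ is symmetric. The only genuinely delicate points are the bookkeeping of the second order integration by parts and the verification of the third symmetry equation; the rest is either trivial (because $F_2$ is scalar) or the engineered cancellation of the boundary data at $x=\alpha$.
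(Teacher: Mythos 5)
Your overall framework coincides with the paper's: symmetry is reduced to the three symmetry equations for the pair $\{W,\tilde D\}$ on $(-1,\alpha)$ together with the vanishing at both endpoints of $F_2W$ and of $F_1W-(F_2W)'$ (equivalently, of $F_1W-WF_1^*$, by the second symmetry equation). The paper simply cites this criterion from \cite{GPT03} and \cite{DG04} rather than re-deriving it by double integration by parts, but that difference is immaterial. Your endpoint analysis is correct and in fact more explicit than anything in the paper: at the cut, $F_2(\alpha)=0$ and $F_1(\alpha)=(\alpha^2-1)\mathbf I=F_2'(\alpha)$ so that $F_1W-(F_2W)'$ vanishes there; at $x=-1$, writing $W=(1-x^2)^{\frac n2 -1}T(x)$, the potentially singular contribution is proportional to $(1+\alpha)\,(\mathbf I-J)\,T(-1)$ with $J=\left(\begin{smallmatrix}0&1\\1&0\end{smallmatrix}\right)$, and this vanishes because $T(-1)=n\left(\begin{smallmatrix}1&1\\1&1\end{smallmatrix}\right)$. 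These are exactly the right cancellations, including the case distinction in $n$.

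The genuine gap is in the bulk: the second and, above all, the third symmetry equation are never established. You assert that they ``collapse to polynomial matrix identities that are checked entry by entry'' and that you \emph{expect} the third one to be the main computational obstacle --- but that check \emph{is} the content of the theorem (it is precisely where the constants $n-p+1$, $p+1$ and $\alpha(n-2p)$ are tested, as you note yourself), and as written it is deferred, not performed. Moreover, the brute-force route you propose is harder than necessary. The paper's proof rests on a reduction you are missing: writing $F_2^D=(1-x^2)\mathbf I$, $F_1^D=-(n+2)x\mathbf I-2J$, $F_0^D=-\left(\begin{smallmatrix}p&0\\0&n-p\end{smallmatrix}\right)$ for the coefficients of the operator $D$ of Theorem \ref{operatorD}, one has
\begin{equation*}
F_2=-(x-\alpha)F_2^D,\qquad F_1=-(x-\alpha)F_1^D+(x^2-1)\mathbf I,\qquad F_0=-(x-\alpha)F_0^D+xE_1+E_0+\alpha(n-p)\mathbf I,
\end{equation*}
and the pair $\{W,D\}$ is already known (from \cite{PZ16}, \cite{GPT03}) to satisfy the symmetry equations and boundary conditions on $(-1,1)$. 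Substituting these relations and using the equations for $D$, the equations for $\tilde D$ reduce to short formal manipulations; for instance the second one becomes
\begin{equation*}
-2(x-\alpha)\Bigl((F_2^DW)'-\tfrac12\bigl(F_1^DW+W(F_1^D)^*\bigr)\Bigr)-2F_2^DW-2(x^2-1)W,
\end{equation*}
which vanishes term by term since $(F_2^DW)'=\tfrac12\bigl(F_1^DW+W(F_1^D)^*\bigr)$ and $F_2^D=(1-x^2)\mathbf I$. Either import this reduction or actually carry out your entry-by-entry verification; until one of the two is done, your argument is incomplete precisely at its computational core.
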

\begin{proof}
From \cite{GPT03} or \cite{DG04} we have that a differential operator $D=\frac{d^2}{dx^2} F_2(x)+\frac{d}{dx} F_1(x)+F_0$
is symmetric with respect to a weight $W$ defined in $(a,b)$ if and only if it satisfies, for $a<x<b$, the symmetry equations
\begin{equation} \label{symmeq}
  \begin{split}
  F_2 W & =WF_2^*,\\
   2(F_2W)'-F_1W &=WF_1^*,\\
 (F_2W)''-(F_1W)'+F_0W &=WF_0^*
  \end{split}
\end{equation}
and  the boundary conditions
\begin{equation}\label{boundary}
  \lim_{x\to a,b} F_2(x)W(x)=0, \quad \lim_{x\to a,b} \big (F_1(x)W(x)-WF_1^*(x)\big)=0.
\end{equation}

We have the following relations among the coefficients of the differential operators $D$ and $\tilde D$,
\begin{align*}
  \tilde F_2&= -(x-\alpha) F_2,\\
  \tilde F_1&= -(x-\alpha) F_1+(x^2-1)I,\\ 
  \tilde F_0&= -(x-\alpha) F_0+xE_1+E_0+\alpha I.
\end{align*}
We know that $D$ is a symmetric operator with respect to the weight $W$ for a subspace of functions defined in the interval $(-1,1)$, thus, the pair $\{W,D\}$ satisfies the equations \eqref{symmeq} and \eqref{boundary} with $a=-1, b=1$.
From this and by straightforward computations we verify that $\tilde D$  and $W$ also  satisfy the equations \eqref{symmeq} and the boundary conditions  \eqref{boundary} with $a=-1, b=\alpha$.
\end{proof}

\

 To prove that the differential operator $\tilde D$ commutes with the integral operator $S$ we have to verify that
 $\left( k(x,y)^*\right)\tilde D_x= (k(x,y)\tilde D_y)^*$, see Proposition \ref{Dk}. For this purpose, we will need  three different things: the explicit expressions of the three term recursion relation, the Christoffel-Darboux formula and a differentiation formula for a
matrix orthonormal sequence $\{Q_w\}_{w\ge 0}$. For the benefit of the reader we postpone the statements and  proofs to Section \ref{Secc-prop},  and proceed directly to the statement of our main result.

The three term recursion mentioned above can be thought of as a difference operator acting on the $w$ variable ``from the left hand side.''
We are in the presence of a ``bispectral" situation, and we will eventually see that it gives us the ``commuting miracle" mentioned at the beginning of this paper.

\

\begin{thm}  The differential operator $\tilde D$ satisfies
$$  \left( k(x,y)^*\right)\tilde D_x= (k(x,y)\tilde D_y)^*.$$
\end{thm}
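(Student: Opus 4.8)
The plan is to use Proposition~\ref{Dk}, which turns the claim into a pointwise identity for the kernel, and then to unwind that identity with the three structural relations (three term recurrence, Christoffel--Darboux, and the differentiation formula) postponed to Section~\ref{Secc-prop}. Writing $k(x,y)^*=\sum_{w=0}^N Q_w^*(y)Q_w(x)$ and recalling that $\tilde D$ acts on the right in its free variable, the desired identity $(k^*)\tilde D_x=(k\tilde D_y)^*$ reads
\[\sum_{w=0}^N Q_w^*(y)\,(Q_w\tilde D)(x)=\sum_{w=0}^N (Q_w\tilde D)(y)^*\,Q_w(x),\]
where $(Q_w\tilde D)$ denotes the right action of $\tilde D$ on $Q_w$. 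First I would evaluate $(Q_w\tilde D)(x)$ explicitly. Since $\|R_w\|^2$ is diagonal by \eqref{norma2Pw}, the matrix $S_w$ is diagonal and commutes with the diagonal eigenvalue $\Lambda_w$, so from Theorem~\ref{operatorD} the orthonormal polynomials inherit the equation $Q_wD=\Lambda_wQ_w$ with $\Lambda_w=\Lambda_w^*$. Feeding this together with the decomposition $\tilde D=-D(x-\alpha)+\tfrac{d}{dx}(x^2-1)+xE_1+\alpha(n-p)I+E_0$ into the coefficient relations $\tilde F_2=-(x-\alpha)F_2$, $\tilde F_1=-(x-\alpha)F_1+(x^2-1)I$, $\tilde F_0=-(x-\alpha)F_0+xE_1+E_0+\alpha I$ gives
\[(Q_w\tilde D)(x)=-(x-\alpha)\Lambda_w Q_w(x)+(x^2-1)Q_w'(x)+Q_w(x)M(x),\qquad M(x):=xE_1+\alpha(n-p)I+E_0.\]

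Substituting this on both sides, the two eigenvalue contributions combine into $(y-x)\,k_\Lambda(y,x)$, where $k_\Lambda(y,x):=\sum_w Q_w^*(y)\Lambda_w Q_w(x)$, and the whole statement collapses to the single kernel identity
\[(x^2-1)\,\partial_x k(y,x)-(y^2-1)\,\partial_y k(y,x)+k(y,x)M(x)-M(y)^*k(y,x)+(y-x)\,k_\Lambda(y,x)=0,\]
with $k(y,x)=\sum_w Q_w^*(y)Q_w(x)$. Proving this identity is the core of the argument. I would attack it by replacing each building block with its structural relation: the differentiation formula rewrites $(x^2-1)Q_w'(x)$ (and likewise in $y$) as a three term combination of $Q_{w-1},Q_w,Q_{w+1}$ with constant matrix coefficients, while the three term recurrence disposes of the multiplications by $x$ and by $y$, including the $xE_1$ and $yE_1$ pieces sitting inside $M(x)$ and $M(y)^*$ (note $E_1^*=E_1$). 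After these substitutions every term of the left hand side becomes a finite sum of expressions $Q_{w'}^*(y)\,(\text{constant matrix})\,Q_{w''}(x)$ with $w',w''$ ranging over neighbouring indices.

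The decisive step is then a summation by parts in $w$: the interior contributions $0<w<N$ telescope and cancel in pairs, exactly as in the Christoffel--Darboux computation, leaving only boundary terms carried by the indices $w=N$ and $w=N+1$. Here I would invoke the Christoffel--Darboux formula itself to package the survivors as one expression in $Q_N(x),Q_{N+1}(x),Q_N(y),Q_{N+1}(y)$ and the recurrence coefficients at level $N$. The theorem is proved once this boundary expression is shown to vanish.

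I expect the boundary cancellation to be the main obstacle, and it is precisely here that the apparently ad hoc constants in $\tilde D$ are forced. One needs the eigenvalue matching $E_1=\Lambda_N-NI$ and $\Lambda_{N+1}-\Lambda_N=-(2N+n+2)I$, together with the explicit $E_0$ and the recurrence coefficients at $w=N$, to make the residual terms collapse. Because the coefficients are matrices the order of multiplication must be tracked throughout, and the adjoint $M(y)^*$, in which $E_0^*\neq E_0$, interacts nontrivially with the off diagonal recurrence entries; thus the bookkeeping, rather than any single conceptual difficulty, is where the real work lies, the commuting scalar Bell Labs computation being its degenerate shadow.
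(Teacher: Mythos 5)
Your first half coincides, step for step, with the paper's own proof: the reduction via Proposition \ref{Dk}, the decomposition $\tilde D=-D(x-\alpha)+\frac{d}{dx}(x^2-1)+xE_1+\alpha(n-p)+E_0$, the eigenvalue equation $Q_wD=\Lambda_wQ_w$ (your justification via diagonality of $S_w$ and $\Lambda_w$ is the argument the paper leaves implicit), and your reduced kernel identity is, up to an overall sign, the assertion that the right-hand side of \eqref{a} vanishes. Your endgame is also, in substance, the paper's, merely organized by Abel summation instead of by a direct appeal to Christoffel--Darboux: setting $G_w=Q_{w-1}^*(y)\tilde A_w^*Q_w(x)-Q_w^*(y)\tilde A_wQ_{w-1}(x)$, the recurrence and $\tilde B_w^*=\tilde B_w$ give the local identity $(x-y)Q_w^*(y)Q_w(x)=G_{w+1}-G_w$, and the paper's final coefficient check, namely $N(N+n+2)-w(w+n+2)=\sum_{j=w+1}^N(n+2j+1)$ for every $w$, is exactly your summation by parts: with $c_w=N(N+n+2)-w(w+n+2)$ the interior differences match because $c_w-c_{w-1}=-(n+2w+1)$, and the lone boundary term $c_NG_{N+1}$ dies because $c_N=0$, which is precisely your (correct) observation $E_1=\Lambda_N-N\mathbf I$. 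So the route is the same; the two summations are transposes of each other.

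The genuine gap is that the decisive computation is never performed, and the one tool needed to perform it is treated as a black box. In the matrix setting ``the differentiation formula'' is not a single canonical identity: Proposition \ref{diff} produces a whole family of them, with free parameters $a_{21},c_{12},a_{11},a_{22}$, and the computation only collapses to the clean bookkeeping you describe after the tuned choice $a_{21}=-1-\tfrac{n+2w}{(p+w)(n-p+w)}$, $c_{12}=a_{11}=a_{22}=0$, which makes $\overline F_w=F_w=\left(\begin{smallmatrix}p&0\\0&n-p\end{smallmatrix}\right)=\overline F_w^*$, $\overline G_w=\overline G_w^*$, $F_w+E_1=-N(N+n+2)\mathbf I$ a scalar matrix, and, crucially, $\widetilde G_w=E_0$. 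That last identity makes every $E_0$-term in \eqref{c} cancel outright against $\widetilde G_w$, so your anticipated main difficulty --- that $E_0^*$ ``interacts nontrivially with the off-diagonal recurrence entries'' --- never occurs: $E_0$ never meets the recurrence at all, and no residual expression in $Q_N$, $Q_{N+1}$, $E_0$ and the level-$N$ recurrence coefficients ever has to be confronted. Relatedly, your allocation of the two numerical facts is off: the eigenvalue spacing $\Lambda_w-\Lambda_{w-1}=-(2w+n)\mathbf I$ (your formula is its $w=N+1$ instance) is what cancels the interior terms at \emph{every} $w$, while $E_1=\Lambda_N-N\mathbf I$ alone kills the boundary term. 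Without the tuned differentiation formula, or an equivalent verification that the parameter-dependent terms cancel among themselves, your claim that the interior contributions ``telescope and cancel in pairs'' is an expectation rather than a proof --- and, as your own hedging concedes, all the content of the theorem is concentrated exactly there.
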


\begin{proof}     Let $D$ the right hand side differential operator introduced in Theorem \ref{operatorD}. We have that the orthonormal polynomials
$\{Q_w\}_w$ are eigenfunctions of $D$ with the  eigenvalue $\Lambda_w$
, i.e. $Q_wD=\Lambda_w Q_w$.
 Then
\begin{align*}
  (k(x,y)\tilde D_y )^*
  &=-(y-\alpha)\sum_{w=0}^N Q_w^*(y)\Lambda_wQ_w(x)+(y^2-1) \sum_{w=0}^N \frac{d}{dy}Q_w^*(y) Q_w(x)\\
  & \quad + \sum_{w=0}^N \left( yE_1+E_0+\alpha(n-p)\right)^* Q_w^*(y)Q_w(x),
\end{align*}
and similarly
\begin{align*}
 \left( k(x,y)^*\right)\tilde D_x 
  &=-(x-\alpha)\sum_{w=0}^N Q_w^*(y)\Lambda_wQ_w(x)+(x^2-1) \sum_{w=0}^N \frac{d}{dx}Q_w^*(y) Q_w(x)
    \\ & \quad
  + \sum_{w=0}^N  Q_w^*(y)Q_w(x)\left( xE_1+E_0+\alpha(n-p)\right).
\end{align*}
%
%
Thus,
\begin{equation}
\begin{aligned}\label{a}
 ( k\tilde D_y)^*&-( k^*\tilde D_x)= \sum_{w=0}^N \left((x-y)Q_w^*(y)\Lambda_wQ_w(x) +(y^2-1)\frac{d}{dy}Q_w^*(y)Q_w(x) \right. \\ &  \left.-(x^2-1)Q_w^*(y)\frac{d}{dx}Q_w(x) +
 (E_0^*+yE_1^*)Q_w^*(y)Q_w(x)-Q_w^*(y)Q_w(x)(E_0+xE_1)\right).
\end{aligned}
\end{equation}

\noindent By using the differentiation formula given in Proposition \ref{diffON}, we have
\begin{equation}\label{c}
  \begin{split}
    ( k\tilde D_y)^*&-( k^*\tilde D_x)=\sum_{w=0}^N  (x-y)Q_w^*(y)\Lambda_wQ_w(x)  +yQ_w^*(y)(w-\overline F_w^*) Q_w(x)  \\
   &-xQ_w^*(y)(w-\overline F_w) Q_w(x) 
 +Q_w^*(y)(\overline G_w -\overline G_w^* )Q_w(x) \\ &  + \big(E_0+y( F_w+E_1)-\widetilde G_w\big)^* Q_w^*(y)Q_w(x)
- Q_w^*(y)Q_w(x)
\big(E_0+x( F_w+E_1)-\widetilde G_w\big)
\\ & \quad
-Q_{w-1}^*(y)\overline H_w^*Q_{w}(x)+ Q_{w}^*(y)\overline H_wQ_{w-1}(x).
  \end{split}
\end{equation}

If we take $$a_{21}=-1-\tfrac{n+2w}{(p+w)(n-p+w)}\, \quad \text{ and } \quad a_{22}=c_{22}=0,$$ in the matrices given in Propositions \ref{diff} and \ref{diffON}, we have
\begin{align*}
  F_w &= \begin{pmatrix}    p&0\\0&n-p   \end{pmatrix},  & G_w&= -\begin{pmatrix}    0&\frac{p(n-p+w+1)}{p+w} \\ \frac{(n-p)(p+w+1)}{n=p+w} &0   \end{pmatrix}, \\
 \widetilde G_w& = \begin{pmatrix}    0&n-p+1 \\ p+1 &0   \end{pmatrix},
& \;H_w&=(n+2w+n+1)A_w,
\end{align*}
where $A_w$ is the matrix in the three term recursion relation given in \eqref{ttrrmonic}.
In particular  we have  $$F_w+E_1= -N(N+n+2)\mathbf I  \quad \text{ and } \quad \widetilde G_w=E_0.$$
By using the explicit expression of $\|R_w\|$ (see \eqref{norma2Pw}), we get
$$\overline F_w=\|R_w\|^{-1} F_w\|R_w\|=F_w=\overline F_w^*, \qquad \overline G_w=\overline G_w^*, \qquad \overline H_w=(n+2w+n+1)\tilde A_w .$$

Therefore from \eqref{c} we obtain
\begin{equation}\label{b}
 \begin{aligned}
  ( k\tilde D_y)^*-( k^*\tilde D_x)= & \sum_{w=0}^N \, (x-y)Q_w^*(y)\Lambda_wQ_w(x)
 +(x-y)Q_w^*(y)(F_w-w) Q_w(x)
 \\
 &\quad 
+(x-y)N \left( N+n+2 \right) Q_w^*(y)Q_w(x) \\
& \quad  -
(n+2w+1)\left(Q_{w-1}^*(y)\tilde A_w^*Q_{w}(x)-
   Q_{w}^*(y)\tilde A_wQ_{w-1}(x)\right).
\end{aligned}
\end{equation}

\noindent By using the Christoffel-Darboux formula given in Proposition \ref{CrisDarboux},
\begin{equation*}
Q_{w-1}^*(y)\tilde A_w^*Q_{w}(x)-
   Q_{w}^*(y)\tilde A_wQ_{w-1}(x)=(x-y)
   \sum_{k=0}^{w-1}Q_{k}^*(y)Q_{k}(x),
\end{equation*}
and the fact that $\Lambda_w=\Lambda_w(D)=-w(w+n+1)-F_w$ (See Theorem \ref{operatorD}) we have
\begin{align*}
& \frac{ ( k\tilde D_y)^*-( k^*\tilde D_x)}{(x-y)}= \displaybreak[0]
\\& \quad =\sum_{w=0}^N\Big(Q_w^*(y)(-w(w+n+2)+N \left( N+n+2 \right))Q_w(x)  -
(n+2w+1) \sum_{k=0}^{w-1}Q_{k}^*(y)Q_{k}(x)\Big)  \displaybreak[0] \\
&\quad = \sum_{w=0}^N (-w(w+n+2)+N \left( N+n+2 \right)) Q_w^*(y)Q_w(x)  -
\sum_{w=0}^{N-1}\Big( \sum_{j=w+1}^N (n+2j+1)\Big) Q_{w}^*(y)Q_{w}(x)
\\ & \quad =0.
\end{align*}

\noindent Therefore we get $ ( k\tilde D_y)^*=( k^*\tilde D_x)$ and this  concludes the proof of the theorem.
\end{proof}

\

\begin{cor}
  The differential operator $\tilde D$ commutes with the integral operator $S$.
\end{cor}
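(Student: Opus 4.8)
The plan is to invoke Proposition~\ref{Dk} directly. That proposition establishes the equivalence
$$\tilde D\,S = S\,\tilde D \qquad \iff \qquad \bigl(k(x,y)^*\bigr)\tilde D_x = \bigl(k(x,y)\tilde D_y\bigr)^*,$$
provided $\tilde D$ is a symmetric differential operator. Thus the corollary is essentially a bookkeeping step: the two hypotheses needed to apply Proposition~\ref{Dk} have already been verified as separate theorems in the preceding development.

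First I would recall that $\tilde D$ is symmetric with respect to the inner product $\langle\,,\,\rangle_\alpha$ of~\eqref{W-trunc}, which is precisely the content of the theorem asserting symmetry of $\tilde D$ (proved via the symmetry equations~\eqref{symmeq} and the boundary conditions~\eqref{boundary} with $a=-1$, $b=\alpha$). Second, I would recall that the kernel identity $\bigl(k(x,y)^*\bigr)\tilde D_x = \bigl(k(x,y)\tilde D_y\bigr)^*$ holds, which is exactly the statement of the immediately preceding theorem (established through the three term recursion, the Christoffel--Darboux formula, and the differentiation formula applied to the orthonormal sequence $\{Q_w\}$). With both the symmetry of $\tilde D$ and the kernel identity in hand, the hypotheses of Proposition~\ref{Dk} are fully met.

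The conclusion then follows by a single application of Proposition~\ref{Dk}: since $\tilde D$ is symmetric and its coefficients satisfy the required kernel relation, the proposition yields $\tilde D\,S = S\,\tilde D$, i.e.\ the differential operator $\tilde D$ commutes with the integral operator $S$. There is no genuine obstacle here, since all the substantive work has been carried out in the two theorems above; the only point requiring care is that Proposition~\ref{Dk} presupposes symmetry of the operator acting on the kernel, so one must cite the symmetry theorem explicitly rather than treat it as automatic. The proof therefore amounts to assembling the two previously proved facts and quoting Proposition~\ref{Dk}.
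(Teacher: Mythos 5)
Your proof is correct and matches the paper's own route exactly: the corollary is stated without a separate proof precisely because it follows, as you argue, by combining the symmetry theorem for $\tilde D$ and the kernel identity $\bigl(k(x,y)^*\bigr)\tilde D_x = \bigl(k(x,y)\tilde D_y\bigr)^*$ with Proposition~\ref{Dk}. You also correctly flag the one subtlety --- that Proposition~\ref{Dk} requires symmetry as a hypothesis, so the symmetry theorem must be cited and not taken for granted.
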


\

\section{Properties of the   relevant orthogonal polynomials}\label{Secc-prop}

In this section we give some results about the sequence of matrix orthogonal polynomials with respect to the weight $W(x)$, introduced in \eqref{peso-x}. 
 Most of these results were used in the previous section to arrive at our main result.

\subsection{Three term recursion relation}
From the three term recursion relation for the sequence $\{P_w\}_{w\in\NN_0}$ given in \cite{PZ16} (Theorem 4.1), we get the
the three-term recursion relation for the monic orthogonal polynomials $\{R_w\}_{w\in\NN_0}$
\begin{equation}\label{ttrrmonic}
  x\,R_w(x)= A_w R_{w-1}(x)+B_wR_w(x)+R_{w+1}(x),
\end{equation}
where
\begin{align*}
  A_w& = \tfrac{w(n+w)}{(n+2w-1)(p+w)(n-p+w)(2w+n+1)}\begin{pmatrix}
  (p+w-1)(n-p+w+1){} &0\\0&  (p+w+1)(n-p+w-1)
\end{pmatrix},\\
 B_w &=\begin{pmatrix}
    0& \tfrac {-p}{(p+w)(p+w+1)}\\ \tfrac {-(n-p)}{(n-p+w)(n-p+w+1)}& 0
  \end{pmatrix}.
\end{align*}

\medskip

We have
\begin{align*}
 A_w = \|R_w\|^2\|R_{w-1}\|^{-2},&&
 (B_w\|R_w\|^2 )^* =B_w \|R_w\|^2.
\end{align*}
In fact, from the three term recursion relation \eqref{ttrrmonic} we get
\begin{align*}
  A_w\|R_{w-1}\|^2 & =\langle xR_w,R_{w-1}\rangle= \langle R_w,xR_{w-1}\rangle= \|R_w\|^2, \\
B_w\|R_{w}\|^2 &=\langle xR_w,R_{w}\rangle= \langle R_w,xR_{w}\rangle= \|R_w\|^2 B_w^*.
\end{align*}

\

\

An orthonormal sequence of matrix polynomials is given    in terms    of $R_w$ by
$  Q_w= S_wR_w$,
where $S_w=\|R_w\|^{-1}$ is the inverse of the matrix $\|R_w\|$.
Therefore the orthonormal polynomials $\{Q_w\}_{w\in\NN_0}$ satisfy the three-term recursion relation
$$ x\,Q_w(x)= \tilde A_w Q_{w-1}(x)+\tilde  B_w Q_w(x)+\tilde  A_{w+1}^* Q_{w+1}(x), $$
where
\begin{align*}
  \tilde A_w = S_wA_wS_{w-1}^{-1}=\|R_w\|\|R_{w-1}\|^{-1}
 && \text{and}&&
 \tilde B_w = \|R_w\|^{-1}B_w\|R_{w}\|.
\end{align*}
 We observe that $\tilde B_w^* =\tilde B_w$.

\medskip

\begin{prop}\label{CrisDarboux}
   The sequence of matrix orthonormal  polynomials $\{Q_w\}_{w\geq 0}$ satisfies the following Christoffel-Darboux formula
\begin{equation}
Q_{w-1}^*(y)\tilde A_w^*Q_{w}(x)-
   Q_{w}^*(y)\tilde A_wQ_{w-1}(x)=(x-y)
   \sum_{k=0}^{w-1}Q_{k}^*(y)Q_{k}(x).
\end{equation}
where
\begin{align*}
  \tilde A_w = S_wA_wS_{w-1}^{-1}=\|R_w\|\|R_{w-1}\|^{-1}
 && \text{and}&&
 \tilde B_w = \|R_w\|^{-1}B_w\|R_{w}\|.
\end{align*}
\end{prop}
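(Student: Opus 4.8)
The plan is to mimic the classical scalar Christoffel--Darboux argument, the only genuinely new feature being that one must track the order of the matrix factors throughout. The two facts I would lean on are the symmetric form of the three-term recursion for the orthonormal polynomials,
$$ x\,Q_w(x)= \tilde A_w Q_{w-1}(x)+\tilde  B_w Q_w(x)+\tilde  A_{w+1}^* Q_{w+1}(x), $$
together with the self-adjointness $\tilde B_w^*=\tilde B_w$ recorded just above the statement. The crucial structural point is that the subdiagonal coefficient $\tilde A_w$ and the superdiagonal coefficient $\tilde A_{w+1}^*$ are conjugate-transpose partners; this is exactly what allows a telescoping sum to close up in the non-commutative setting.

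First I would take the conjugate transpose of the recursion, evaluated at a real point $y$, to obtain
$$ y\,Q_w^*(y)= Q_{w-1}^*(y)\tilde A_w^* + Q_w^*(y)\tilde B_w + Q_{w+1}^*(y)\tilde A_{w+1}, $$
where I have used $\tilde B_w^*=\tilde B_w$. Then for each $k$ I would form the difference
$$ (x-y)\,Q_k^*(y)Q_k(x)= Q_k^*(y)\big(x\,Q_k(x)\big)-\big(y\,Q_k^*(y)\big)Q_k(x) $$
and substitute both recursions. The diagonal $\tilde B_k$ contributions cancel, and regrouping the four surviving terms yields precisely
$$ (x-y)\,Q_k^*(y)Q_k(x)= U_{k+1}-U_k,\qquad U_k:=Q_{k-1}^*(y)\tilde A_k^* Q_k(x)-Q_k^*(y)\tilde A_k Q_{k-1}(x). $$

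The final step is to sum this identity over $k=0,\dots,w-1$. The right-hand side telescopes to $U_w-U_0$; with the standard conventions $Q_{-1}=0$ (equivalently $\tilde A_0=0$) the boundary term $U_0$ vanishes, while $U_w$ is exactly the left-hand side of the asserted formula. I do not expect a genuine obstacle here, since the computation is elementary once the setup is fixed. The one spot where the matrix case truly differs from the scalar case, and hence the only place needing care, is the bookkeeping of the non-commuting factors under conjugation: one must check that the $\tilde A_{w+1}^*$ appearing in the recursion becomes $\tilde A_{w+1}$ after taking adjoints, so that the two halves of $U_{k+1}-U_k$ align, and that the $\tilde B_k$ terms really do cancel, which is where the input $\tilde B_k^*=\tilde B_k$ is indispensable. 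Without that self-adjointness the telescoping would leave an uncancelled diagonal remainder and the argument would fail.
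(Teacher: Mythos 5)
Your proof is correct, and it differs from the paper in an interesting way: the paper offers no argument at all for this proposition, simply citing Dur\'an's paper on Markov's theorem for orthogonal matrix polynomials (reference [Dur96] in the bibliography) where the matrix Christoffel--Darboux formula is established. Your telescoping derivation supplies the missing computation in a self-contained way, and it is sound: writing $(x-y)Q_k^*(y)Q_k(x)=Q_k^*(y)\bigl(xQ_k(x)\bigr)-\bigl(yQ_k^*(y)\bigr)Q_k(x)$, substituting the recursion and its adjoint, the $\tilde B_k$ terms cancel by $\tilde B_k^*=\tilde B_k$, and the four surviving terms are exactly $U_{k+1}-U_k$ with your $U_k$, so the sum over $k=0,\dots,w-1$ collapses to $U_w$ once one imposes $Q_{-1}=0$. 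The two structural inputs you isolate --- that the sub- and super-diagonal coefficients $\tilde A_{k}$ and $\tilde A_{k+1}^*$ are adjoint partners, and that $\tilde B_k$ is self-adjoint --- are precisely what the symmetric normalization $Q_w=\|R_w\|^{-1}R_w$ buys, and both facts are recorded in the paper just before the statement, so nothing in your argument goes beyond what the text makes available. One point worth stating explicitly if you write this up: taking the conjugate transpose of the recursion at the point $y$ uses that $y$ is real and that the coefficient matrices are constant, so that $\bigl(yQ_k(y)\bigr)^*=y\,Q_k^*(y)$; this is harmless here since the inner product \eqref{W-trunc} only involves real arguments, but it is the one place where a complex evaluation point would change the formula. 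What your route buys is transparency and independence from the external reference; what the paper's route buys is brevity and a pointer to the general theory, where the same telescoping is carried out for an arbitrary matrix weight.
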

\begin{proof}
 This result is proved in \cite{D96}.
\end{proof}

\

\subsection{Differentiation formulas} 

In this section we obtain several differentiation formulas for monic orthogonal polynomials.
There are four free parameters, namely $a_{21}$, $c_{12}$, $a_{11}$ and $a_{22}$.

\smallskip

\begin{prop}\label{diff} Let $\{R_w\} $ be the monic orthogonal polynomials associated to the weight $W=W_{p,n}$ introduced in \eqref{peso-x}. 
We have
  \begin{align*}
  (1-x^2)\frac {d R_w}{dx}(x)= & -w\, x R_w(x)+ x\big( F_w R_w(x)-R_w(x) F_w\big) +G_w R_w(x)\\
  &+ R_w(x) \widetilde G_w + H_wR_{w-1}(x),
  \end{align*}
 where
 \begin{align*}
   F_w&= -\tfrac{(n+2w)}{(p+w)(n-p+w)}\begin{pmatrix}     p&0\\0&n-p   \end{pmatrix}
   - a_{21}\begin{pmatrix}     p&0\\0&n-p   \end{pmatrix}
   +c_{12} \frac{(p+w)(n-p+w) }{(n-2p)}\begin{pmatrix}   0& p \\ n-p & 0   \end{pmatrix} +a_{11}\mathrm{Id},
   \\ \mbox{} \displaybreak[0] \\
  G_w&= \begin{pmatrix}     0&\frac{p(n-p+w)}{(p+w)^2} \\ \frac{(n-p)(p+w)}{(n-p+w)^2}&0   \end{pmatrix}
        + a_{21} \begin{pmatrix}   0& \frac{p(n-p+w)}{(p+w)}\\ \frac{(n-p)(p+w)}{(n-p+w)} & 0         \end{pmatrix}
        \\ &\qquad
        +c_{12} \big(w(w+n)-p(n-p)\big) \begin{pmatrix}
          1&0\\0&0         \end{pmatrix} +a_{22}\mathrm{Id},
    \\ \mbox{} \displaybreak[0] \\
 \widetilde G_w&= \begin{pmatrix}
   0& 1\\ 1&0  \end{pmatrix} -\left(\tfrac{(n+2w)}{(p+w)(n-p+w)} + a_{21}\right)  \begin{pmatrix}     0& n-p\\ p& 0  \end{pmatrix}
 + c_{12}  \begin{pmatrix}  p(n-p) & 0\\0 & -w(w+n)
     \end{pmatrix}  -a_{22}\mathrm{Id},
     \\    \mbox{} \displaybreak[0] \\
    H_w&=\frac{w(w+n) }{(p+w)(n-1+2w)(n-p+w)}  \begin{pmatrix}        (p+w-1)(n-p+w+1)&0\\0&(p+w+1)(n-p+w-1)     \end{pmatrix}
    \\ & \qquad + c_{12}\frac{w(n+w)}{(n-1+2w)} \begin{pmatrix}
      0& \frac{p(n-p+w-1)}{p+w}\\ -\frac{(p+w-1)(n-p)}{(n-p+w)}& 0
    \end{pmatrix}.
      \end{align*}
\end{prop}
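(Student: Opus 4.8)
The plan is to read the identity as an expansion of the degree-$(w+1)$ matrix polynomial $(1-x^2)R_w'(x)$ in the monic basis $\{R_k\}$ and to show that only three consecutive terms, those of degrees $w+1$, $w$ and $w-1$, can occur. Since $R_w$ is monic of degree $w$, the leading term of $(1-x^2)R_w'(x)$ is $-w\,\mathrm{Id}\,x^{w+1}$, which is cancelled by $+w\,xR_w(x)$; hence $P(x):=(1-x^2)R_w'(x)+w\,xR_w(x)$ has degree at most $w$ and may be written as $P=\sum_{k=0}^w M_kR_k$. The assertion is then that $M_k=0$ for $k\le w-2$, that the degree-$w$ part can be repackaged as $x(F_wR_w-R_wF_w)+G_wR_w+R_w\widetilde G_w$, and that the degree-$(w-1)$ part is $H_wR_{w-1}$. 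This localization to $k\in\{w-1,w\}$ is what makes the whole scheme work: using the recursion \eqref{ttrrmonic} to trade $xR_w$ for $R_{w\pm1}$ and $R_w$, one sees the right-hand side lives in the span of $R_{w-1},R_w,R_{w+1}$, matching $P$ together with the cancelled leading term.

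To motivate the localization I would pair $P$ against $R_k$ and integrate by parts on $(-1,1)$: the boundary term vanishes because $(1-x^2)W(x)\to0$ at $\pm1$, and the bulk term is rewritten with the Pearson-type relation $\big((1-x^2)W\big)'=\tfrac12\big(F_1W+WF_1^*\big)$, which is exactly the second symmetry equation in \eqref{symmeq} for the pair $\{W,D\}$ of Theorem \ref{operatorD} (here the second-order coefficient of $D$ is $(1-x^2)\,\mathrm{Id}$ and $F_1$ is its first-order coefficient). In the scalar case this immediately forces $M_k=0$ for $k\le w-2$; here non-commutativity intervenes, since terms involving the non-scalar constant part of $F_1$, of the shape $\int_{-1}^1 R_w\,(\cdot)\,W\,R_k^*\,dx$, do not vanish by orthogonality alone. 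For this reason I would pin down both the vanishing and the exact coefficients by substituting the explicit Gegenbauer representation of $R_w$ given above, invoking the classical derivative relation for $(1-x^2)\tfrac{d}{dx}C_w^\lambda$ together with the contiguity and three-term relations for the $C_w^\lambda$, and then reassembling the four resulting scalar identities into the $2\times2$ matrix form; matching coefficients against $F_w,G_w,\widetilde G_w,H_w$ completes the computation.

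The four free parameters record the non-uniqueness of this repackaging. Adding $a_{11}\,\mathrm{Id}$ to $F_w$ leaves the commutator $F_wR_w-R_wF_w$ unchanged, and splitting $a_{22}\,\mathrm{Id}$ as $+a_{22}\,\mathrm{Id}$ in $G_w$ and $-a_{22}\,\mathrm{Id}$ in $\widetilde G_w$ cancels in $G_wR_w+R_w\widetilde G_w$, so these two are formal. The parameters $a_{21}$ and $c_{12}$ are more substantial: their contributions are genuine polynomial identities satisfied by the $R_w$ themselves, and I would verify that each collapses to zero using \eqref{ttrrmonic}; for instance, at $w=0$ one has $R_0=\mathrm{Id}$ and the $a_{21}$-part reduces to the vanishing of a single constant $2\times2$ matrix.

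The step I expect to be the main obstacle is precisely this explicit, non-commutative bookkeeping: because $W$, $R_w$ and the coefficient matrices do not commute, one must track the left/right placement of every factor, and the clean scalar structure-relation argument is not available. Carrying the Gegenbauer identities through the $2\times2$ blocks and checking that the $a_{21}$- and $c_{12}$-dependent terms genuinely cancel, rather than merely that some decomposition exists, is where the real work lies; the degree count and the localization are comparatively routine once that computation is organized.
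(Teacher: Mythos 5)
You end up where the paper does, and by essentially the same method: an explicit verification of the ansatz, with the four parameters carried along, after substituting a closed form for $R_w$; only the computational vehicle differs. The paper poses the identity with unknown constant matrices, extracts $F_w+\tilde F_w=-w$ from the leading coefficient (your cancellation of $-w\,\mathrm{Id}\,x^{w+1}$), and then matches the coefficient of each power of $x$, using the closed-form monomial coefficients $R^w_j$ of $R_w$ computed in \cite{Z16}; this reduces the proposition to two families of constant-matrix identities, according to the parity of $w-j$, checked directly. You would instead substitute the Gegenbauer expression for $R_w$ and invoke the classical relations for $(1-x^2)\frac{d}{dx}C_w^\lambda$ plus contiguity relations; that works, and entrywise it encodes the same information as the $R^w_j$, so this is a difference of bookkeeping rather than of strategy --- the paper's route keeps the matching purely algebraic in rational coefficients, yours keeps contact with classical special-function identities. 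Two caveats. First, your integration-by-parts localization is only motivation and you rightly abandon it: right-hand constant factors such as $R_w\widetilde G_w$ are not controlled by orthogonality of the left-module expansion, which is exactly the non-commutative obstruction you name; the paper never needs this step. Second, your claim that the $a_{21}$- and $c_{12}$-parts ``collapse to zero using \eqref{ttrrmonic}'' is the one undeveloped point: the $w=0$ check is not a proof, and the induction through the recursion is not carried out. This is harmless, though, because if you perform the main coefficient matching with $a_{21}$ and $c_{12}$ kept as free parameters (as the paper does in \eqref{verif}), those parts are verified automatically; the paper then exploits precisely this redundancy afterwards, obtaining its two corollaries by differencing parameter values.
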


\begin{proof}
  We look for constant matrices  $F_w, \tilde F_w, G_w, \tilde G_w, H_w$ and $\tilde H_w$ such that
  \begin{equation}\label{diff-prop}
  \begin{split}
  (1-x^2)\frac {d R_w}{dx}(x) &=  x F_w R_w(x) +R_w(x) \tilde F_w +G_w R_w(x)
  + R_w(x) \widetilde G_w  
  + H_wR_{w-1}(x) 
  \end{split}
  \end{equation}
  for all $w\in \NN_0$.
The polynomial  $R_w$ is of the form 
$$ R_w= \sum_{w=0}^w R_j^{w} x^j, \qquad \text {with } R^w_w=\mathrm I.$$
First of all, we can see that $-w=F_w+\tilde F_w$. Moreover,
 we have that \eqref{diff-prop} holds if and only if
\begin{equation}\label{verif}
  \begin{split}
    (j+1) R^w_{j+1}-(j-1) R_{j-1}^w & = -w  R_{j-1}^w + F_w R_{j-1}^w- R_{j-1}^w F_w + G_w R_{j}^w
    + R_{j}^w \tilde G_w + H_w R^{w-1}_j.
  \end{split}
\end{equation}
 The coefficients $R^w_j$ were computed explicitly in \cite{Z16}. Depending on whether $w-j$ is odd or even, we have different expressions:
\begin{align*}
  R^{w}_{w-2k}&=\frac{w! (-1)^k}{2^{2k} k! \,(w-2k)!\,(\tfrac{n+1}2+w-k)_k } \begin{pmatrix}
    \frac{p+w-2k}{p+w}&0\\ 0& \frac{n-p+w-2k}{n-p+w}
  \end{pmatrix},& & 0\le k \le \left \lfloor{w/2}\right \rfloor, \\
  R^{w}_{w-2k-1}&=\frac{w! (-1)^k}{2^{2k} k! \,(w-2k-1)!\,(\tfrac{n+1}2+w-k)_k } \begin{pmatrix}
    0& \frac{1}{p+w}\\  \frac{1}{n-p+w}
  \end{pmatrix},& & 0\le k \le \left \lfloor{(w-1)/2}\right \rfloor.
\end{align*}

The equation \eqref{verif} is equivalent to verifying the following identities, for every integer $k\geq 0$,
\begin{align*}
  &(w-2k+1)  R^w_{w-2k+1}-(w-2k-1)R^w_{w-2k-1} \\ & \qquad  =  -w R^w_{w-2k-1}+ F_w R^w_{w-2k-1}  -R^w_{w-2k-1}F_w
  + G_w R^w_{w-2k}+ R^w_{w-2k} \tilde G_w + H_w R^{w-1}_{w-2k} ,\\
  & \mbox{} \\
  &(w-2k) R^w_{w-2k}-(w-2k-2)R^w_{w-2k-2}  \\ & \qquad = -w R^w_{w-2k-2}+ F_w R^w_{w-2k-2}  -R^w_{w-2k-2}F_w
  + G_w R^w_{w-2k-1}+ R^w_{w-2k-1} \tilde G_w + H_w R^{w-1}_{w-2k-1}.
\end{align*}

\noindent Now,  the proposition follows from straightforward computations.
\end{proof}

By combining different cases in Proposition \ref{diff} we obtain  the following useful results:

\begin{cor}
  The monic orthogonal polynomials $\{R_w\}$ satisfy
  $$ 0=x\big( M_w R_w(x)-R_w(x) M_w\big) +N_w R_w(x)+R_w(x) \widetilde N_w ,$$
  where
  \begin{align*}
    M_w&= \begin{pmatrix}      p&0\\0&n-p    \end{pmatrix},\quad
    N_w=-\begin{pmatrix}
      0 & \frac{p(n-p+w)}{p+w}\\ \frac{(n-p)(p+w)}{n-p+w}    \end{pmatrix}, \quad
    \widetilde N_w=\begin{pmatrix}
      0&n-p\\p&0     \end{pmatrix}.
  \end{align*}
\end{cor}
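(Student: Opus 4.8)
The plan is to exploit the structure of Proposition~\ref{diff}: it is not a single identity but a whole family of differentiation formulas, one for each value of the free parameters $a_{21}, c_{12}, a_{11}, a_{22}$, whereas its left-hand side $(1-x^2)\frac{dR_w}{dx}(x)$ is manifestly independent of all four. Hence the right-hand side must be parameter-free as well, and every partial derivative of the right-hand side with respect to one of the parameters must vanish identically. Each such derivative produces a relation among $R_w$ and $R_{w-1}$, and the claimed corollary is precisely the relation obtained by differentiating in the direction $a_{21}$.

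First I would record that, among the coefficient matrices in Proposition~\ref{diff}, only $F_w$, $G_w$ and $\widetilde G_w$ depend on $a_{21}$, while $H_w$ depends solely on $c_{12}$. This is the crucial point: differentiating in $a_{21}$ kills the term $H_w R_{w-1}(x)$, so the resulting identity involves $R_w$ alone, matching the shape of the corollary (which has no $R_{w-1}$ term). Reading off the $a_{21}$-linear parts of the explicit expressions gives $\partial_{a_{21}}F_w = -M_w$, $\partial_{a_{21}}G_w = -N_w$ and $\partial_{a_{21}}\widetilde G_w = -\widetilde N_w$, with $M_w, N_w, \widetilde N_w$ exactly the matrices in the statement.

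Differentiating the formula of Proposition~\ref{diff} with respect to $a_{21}$ --- equivalently, subtracting the formula for two choices of $a_{21}$ that agree in the remaining parameters and dividing by the difference --- and using $\partial_{a_{21}}\bigl(x(F_wR_w - R_wF_w)\bigr) = x(-M_wR_w + R_wM_w)$, I obtain $0 = x(-M_wR_w + R_wM_w) - N_wR_w - R_w\widetilde N_w$; multiplying through by $-1$ yields the asserted identity. As consistency checks, differentiating instead in $a_{22}$ gives the trivial $0 = R_w - R_w$ (since $a_{22}$ enters $G_w$ and $\widetilde G_w$ with opposite signs via $\pm a_{22}\mathrm{Id}$), and $a_{11}$ drops out of the commutator altogether because $a_{11}\mathrm{Id}$ is central; this explains why the $a_{21}$ direction is the only one carrying content. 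The substantive labor lies entirely in Proposition~\ref{diff}, so I do not anticipate a genuine obstacle here; the only care required is bookkeeping of signs and verifying that the parameter-derivatives of $F_w, G_w, \widetilde G_w$ reproduce $M_w, N_w, \widetilde N_w$ precisely, the argument being nothing more than the observation that a parameter-free quantity has vanishing parameter-derivative.
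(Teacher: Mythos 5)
Your proof is correct and is essentially the paper's own argument: the paper combines Proposition~\ref{diff} at $a_{21}=1$ and $a_{21}=2$ (all other parameters zero), which—since the coefficient matrices are affine in $a_{21}$—is exactly your derivative in the $a_{21}$ direction, isolating the $a_{21}$-linear parts $-M_w$, $-N_w$, $-\widetilde N_w$ while killing the $H_wR_{w-1}$ term.
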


\begin{proof}
It follows from Proposition \ref{diff}: combining the result for the values $a_{21}=1, c_{12}=0, a_{11}=0, a_{22}=0$ with the same result for the values
$a_{21}=2, c_{12}=0, a_{11}=0, a_{22}=0$.
\end{proof}

\begin{cor}
  The orthogonal polynomials $\{R_w\}$ satisfy
  $$ 0=x\big( M_w R_w(x)-R_w(x) M_w\big) +N_w R_w(x)+R_w(x) \widetilde N_w +J_w R_{w-1},$$
  where
  \begin{align*}
    M_w&= \frac{(p+w)(n-p+w)}{n-2p}\begin{pmatrix}      0&p\\n-p& 0    \end{pmatrix}\, , \quad
    N_w=\big(w(w+n)-p(n-p)\big) \begin{pmatrix}
          1&0\\0&0         \end{pmatrix}  \\
    \widetilde N_w& =\begin{pmatrix}
       p(n-p) & 0\\0 & -w(w+n)
     \end{pmatrix} ,  \quad
    J_w= \frac{w(n+w)}{(n-1+2w)} \begin{pmatrix}
      0& \frac{p(n-p+w-1)}{p+w}\\ -\frac{(p+w-1)(n-p)}{(n-p+w)}& 0
    \end{pmatrix}.
  \end{align*}
\end{cor}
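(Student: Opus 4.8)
The plan is to reuse the device that established the preceding corollary. The crucial structural observation about Proposition \ref{diff} is that its left-hand side $(1-x^2)\frac{dR_w}{dx}(x)$ is completely independent of the four free parameters $a_{21}, c_{12}, a_{11}, a_{22}$, whereas the coefficient matrices $F_w, G_w, \widetilde G_w, H_w$ on the right-hand side depend on them affinely, and in particular linearly on each parameter separately. Hence, if I write down the differentiation formula for two distinct specializations of the parameters and subtract, the entire left-hand side cancels, and I am left with a relation whose left-hand side is $0$ and whose right-hand side is the difference of the parameter-dependent terms.

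For the present identity the right choice is to switch on only the parameter $c_{12}$. First I would specialize Proposition \ref{diff} to $c_{12}=1$, $a_{21}=a_{11}=a_{22}=0$, and subtract from it the specialization with all four parameters equal to $0$. Since each coefficient matrix depends linearly on $c_{12}$, this subtraction simply isolates the $c_{12}$-linear part of $F_w, G_w, \widetilde G_w$ and $H_w$. Reading these off from the formulas in Proposition \ref{diff}, the $c_{12}$-part of $F_w$ is $\frac{(p+w)(n-p+w)}{n-2p}\left(\begin{smallmatrix}0&p\\n-p&0\end{smallmatrix}\right)=M_w$, the $c_{12}$-part of $G_w$ is $\big(w(w+n)-p(n-p)\big)\left(\begin{smallmatrix}1&0\\0&0\end{smallmatrix}\right)=N_w$, the $c_{12}$-part of $\widetilde G_w$ is $\left(\begin{smallmatrix}p(n-p)&0\\0&-w(w+n)\end{smallmatrix}\right)=\widetilde N_w$, and the $c_{12}$-part of $H_w$ is exactly $J_w$. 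Because $F_w$ enters Proposition \ref{diff} only through the commutator $F_wR_w(x)-R_w(x)F_w$, the surviving contribution of that term is $x\big(M_wR_w(x)-R_w(x)M_w\big)$, while the terms $G_wR_w$, $R_w\widetilde G_w$ and $H_wR_{w-1}$ yield $N_wR_w$, $R_w\widetilde N_w$ and $J_wR_{w-1}$. The parameter-free summand $-w\,xR_w(x)$ cancels in the subtraction, and assembling the survivors gives precisely the stated identity.

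There is no real obstacle here: once the two specializations are fixed the argument is purely a matter of bookkeeping. The only points requiring care are to confirm that the isolated $c_{12}$-linear pieces of the four coefficient matrices reproduce $M_w, N_w, \widetilde N_w, J_w$ verbatim, and to note that the $-w\,xR_w(x)$ term carries no hidden $c_{12}$-dependence, which it does not, being manifestly parameter-free. As an independent check, one could instead verify the identity directly by inserting the explicit coefficients $R^w_j$ from \cite{Z16} and matching powers of $x$, exactly as in the proof of Proposition \ref{diff}; but the subtraction trick is far shorter and inherits all the verification already done there.
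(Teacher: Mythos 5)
Your proof is correct and is essentially the paper's own argument: the paper likewise subtracts two specializations of Proposition \ref{diff} differing only in $c_{12}$ (it uses $c_{12}=2$ and $c_{12}=1$ rather than your $c_{12}=1$ and $c_{12}=0$, which is equivalent since all coefficients are affine in $c_{12}$), so that the $x$-derivative side and the $-w\,xR_w$ term cancel and the $c_{12}$-linear parts $M_w, N_w, \widetilde N_w, J_w$ survive.
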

\begin{proof}
It follows from Proposition \ref{diff}: combining the result for the values $a_{21}=0, c_{12}=2, a_{11}=0, a_{22}=0$ with the same result for the values
$a_{21}=0, c_{12}=1, a_{11}=0, a_{22}=0$.
\end{proof}

\

Starting with the differentiation formulas for the monic orthogonal polynomials $R_w$ we obtain formulas corresponding to the orthonormal sequence $Q_w$.

\begin{prop}\label{diffON}
 Let $\{Q_w\} $ be an orthonormal sequence of matrix polynomials associated to the weight $W=W_{p,n}$ introduced in \eqref{peso-x}. 
We have
  \begin{align*}
  (1-x^2)\frac {d Q_w}{dx}(x)= & -w\, x  Q_w(x)+ x\big( \overline F_w Q_w(x)-Q_w(x) F_w\big) +\overline G_w Q_w(x)\\
  &+ Q_w(x) \widetilde G_w + \overline H_w Q_{w-1}(x),
  \end{align*}
  with
  \begin{align*}
    \overline{F}_w=\|R_w\|^{-1} F_w \|R_w\|,&&
  \overline{G}_w=\|R_w\|^{-1} G_w \|R_w\|,&&
  \overline{H}_w=\|R_w\|^{-1} H_w \|R_{w-1}\|,
    \end{align*}
  where $F_w$, $G_w$, $H_w$  and $\widetilde G_w$ are those given in Proposition \ref{diff}.
\end{prop}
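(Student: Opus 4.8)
The plan is to derive the orthonormal formula directly from the monic formula in Proposition \ref{diff} by left-multiplying by the constant matrix $S_w=\|R_w\|^{-1}$. The crucial and only substantive point is that $S_w$ does not depend on $x$, so left multiplication by $S_w$ commutes with differentiation: since $Q_w=S_wR_w$, we have $\frac{dQ_w}{dx}=S_w\frac{dR_w}{dx}$. Everything else is bookkeeping about where the constant matrices sit.

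Concretely, I would start from the identity of Proposition \ref{diff} and apply $S_w$ on the left to obtain
\begin{align*}
(1-x^2)S_w\frac{dR_w}{dx}(x)= & -w\,x\,S_wR_w(x)+ x\big(S_wF_wR_w(x)-S_wR_w(x)F_w\big)\\
& +S_wG_wR_w(x)+S_wR_w(x)\widetilde G_w+S_wH_wR_{w-1}(x).
\end{align*}
The left-hand side is $(1-x^2)\frac{dQ_w}{dx}(x)$, and wherever $R_w$ appears to the \emph{right} of a coefficient matrix one rewrites $S_wR_w=Q_w$ directly; in particular the terms $-w\,x\,S_wR_w$, $S_wR_w(x)F_w$ and $S_wR_w(x)\widetilde G_w$ become $-w\,xQ_w$, $Q_w(x)F_w$ and $Q_w(x)\widetilde G_w$, whose right factors are unchanged.

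The remaining step is to conjugate the coefficient matrices that multiply $R_w$ on the left. For these I would insert $S_w^{-1}S_w=\mathrm{Id}$ between the coefficient and $R_w$, giving $S_wF_wR_w=(S_wF_wS_w^{-1})Q_w=\overline F_wQ_w$ and likewise $S_wG_wR_w=\overline G_wQ_w$, using the definitions $\overline F_w=\|R_w\|^{-1}F_w\|R_w\|$ and $\overline G_w=\|R_w\|^{-1}G_w\|R_w\|$. For the lowered-index term I would instead use $R_{w-1}=S_{w-1}^{-1}Q_{w-1}=\|R_{w-1}\|\,Q_{w-1}$, so that $S_wH_wR_{w-1}=(\|R_w\|^{-1}H_w\|R_{w-1}\|)Q_{w-1}=\overline H_wQ_{w-1}$, which matches the stated $\overline H_w$.

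Collecting these substitutions yields exactly the asserted formula. There is no genuine obstacle: the argument is a direct transport of Proposition \ref{diff} through the similarity $Q_w=S_wR_w$. The only thing requiring care is distinguishing the coefficient matrices acting on the \emph{left} of $R_w$ (which get conjugated by $\|R_w\|^{\pm1}$, or by $\|R_{w-1}\|$ in the recursion term) from those acting on the \emph{right} (which are untouched), and confirming that $S_w$ is indeed constant in $x$ so that it passes freely through the derivative.
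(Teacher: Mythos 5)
Your proof is correct and follows exactly the paper's argument: left-multiply the monic identity of Proposition \ref{diff} by the constant matrix $\|R_w\|^{-1}$, use $Q_w=\|R_w\|^{-1}R_w$, and insert $\|R_w\|\,\|R_w\|^{-1}$ (or $\|R_{w-1}\|\,\|R_{w-1}\|^{-1}$ in the lowered-index term) to produce the conjugated coefficients $\overline F_w$, $\overline G_w$, $\overline H_w$. The paper compresses all of this into one sentence, so your write-up is simply a more explicit version of the same proof.
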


\begin{proof}
The statement follows by multiplying the identity given in  Proposition \ref{diff} by $\|R_w\|^{-1}$ and from the fact that $Q_w=\|R_w\|^{-1} R_w$.
\end{proof}

\

\section{Conclusion and outlook}\label{CyO}

The main result derived in the previous sections is the existence of an explicit differential operator $\tilde D$  which, as we proved, commutes with $S$.

If one compares this result with the one in the celebrated series of papers by
D. Slepian, H. Landau and H. Pollak one may say that we are at the stage of their
first papers. What is needed now is an argument to conclude that the eigenfunctions of $\tilde D $ will automatically be eigenfunctions of the integral operator $S$.

In the series of papers mentioned above the simplicity of the spectrum of $\tilde D$ follows from classical Sturm-Liouville theory and this guarantees that they
have found a good way to compute the eigenvectors of $S$.

In our situation, things could eventually be reduced to that case, but in principle $\tilde D$, as well as $S$, have ``matrix valued eigenvalues,'' and the appropriate notion of ``simple spectrum" requires careful handling. For a recent
careful analysis of the spectral problem in the scalar case, see \cite{K16,KM12}.

At this point this appears as a non-trivial project and we intend to develop it in a future publication. Part of this  project is to develop numerical tools to compute the matrix valued eigenfunctions of $\tilde D$ most likely using expansions of analogs of the Legendre polynomials used in \cite{ORX13}.

There are, of course,   a number of different self-adjoint extensions of our symmetric differential operator $\tilde D$. We are in the ``limit circle" situation of H. Weyl at both endpoints. Only one extension   is   of interest to us, this issue already appears in the scalar case, and it has been carefully discussed in \cite{K16,KM12}. In our matrix value setup this will be part of the project mentioned above.

\bibliographystyle{alpha}

\newcommand{\etalchar}[1]{$^{#1}$}

\end{document}